\newtheorem{theorem}{Theorem}
\newcommand{\Tr}{{\rm tr}\,}
\newcommand{\cH}{\mathcal{H}}
\begin{document}

\baselineskip 7.5mm

\title{The von Neumann entropy and \\ unitary equivalence of quantum states\footnote{The paper will appear in Linear and Multilinear Algebra.}}

\author{Roman Drnov\v sek}


\begin{abstract}
\baselineskip 7.5mm
K. He, J. Hou, and M. Li have recently given a sufficient and necessary condition for unitary equivalence
of quantum states. This condition is based on the von Neumann entropy. 
In this note we first give a short proof of their result, and then we improve it.
\end {abstract}

\maketitle

\noindent
{\it Key words}: von Neumann entropy, quantum states, unitary equivalence \\
{\it Math. Subj.  Classification (2010)}: 15A15 \\

Throughout the paper, let $\cH$ be a complex Hilbert space with $\textrm{dim} \, \cH = n < \infty$.
A {\it quantum state} on $\cH$ is a positive semidefinite operator with trace equal to $1$. If $I$ is the identity operator on $\cH$,
then the operator $\frac{1}{n} I$ is called {\it the maximal mixed state}.
The {\it von Neumann entropy} $S(\rho)$ of a quantum state $\rho$ is defined by 
$$ S(\rho) = -  \Tr (\rho \log_2 \rho) . $$
If the eigenvalues of $\rho$ are $x_1$, $x_2$, $\ldots$, $x_n$, then 
$$ S(\rho) = - \sum_{i=1}^n x_i \log_2 (x_i) , $$
where by convention we put \ $0 \log_2 0 = 0$. 
The range of the entropy is the interval $[0, \log_2 n]$,
where the maximum value is achieved for the maximal mixed state.
For other properties of the entropy see e.g. \cite{Bh} and the references therein.

Two quantum states $\rho$ and $\sigma$ are said to be {\it unitarily equivalent} if there exists a unitary operator 
$U$ such that $\rho = U \sigma U^*$. In this case $\rho$ and $\sigma$ have the same entropy. However, the converse is not true:
two quantum states with the same entropy are not necessarily unitarily equivalent. 
The von Neumann entropy and the unitary equivalence of quantum states play important roles in the theory of quantum information (see e.g. \cite{NC}). 
In this note we begin with a very short proof of the following sufficient and necessary condition for unitary equivalence
that has been recently given in \cite{HHL}.

\begin{theorem}
\label{infinite_points} 
Let $\rho$ and $\sigma$ be quantum states on $\cH$, and let $a \in (0,1)$. Then 
\begin{equation}
S \left( \lambda \rho + \frac{1-\lambda}{n} \, I \right) = S \left( \lambda \sigma + \frac{1-\lambda}{n} \, I\right)
\textrm{   for all   } \lambda \in (0, a) 
\label{eq1}
\end{equation}
if and only if there exists a unitary operator $U$ such that $\rho = U \sigma U^*$.
\end{theorem}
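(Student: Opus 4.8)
The plan is to handle the two directions separately; the converse is immediate. Indeed, if $\rho = U \sigma U^*$ for some unitary $U$, then
\[
\lambda \rho + \frac{1-\lambda}{n} \, I = U \left( \lambda \sigma + \frac{1-\lambda}{n} \, I \right) U^* ,
\]
so the two operators are unitarily equivalent for every $\lambda$, hence isospectral, and therefore share the same von Neumann entropy. The substance lies in the forward direction, which I would prove by showing that the hypothesis forces $\rho$ and $\sigma$ to have the same spectrum; since both are Hermitian, equal spectra (with multiplicity) yield unitary equivalence by the spectral theorem.

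To this end I would first reduce everything to spectral data. Let $x_1, \dots, x_n$ and $y_1, \dots, y_n$ be the eigenvalues of $\rho$ and $\sigma$. The eigenvalues of $\lambda \rho + \frac{1-\lambda}{n} I$ are $\frac{1}{n} + \lambda u_i$ with $u_i := x_i - \frac{1}{n}$, and likewise those of $\lambda \sigma + \frac{1-\lambda}{n} I$ are $\frac{1}{n} + \lambda v_i$ with $v_i := y_i - \frac{1}{n}$. Writing $f(t) = -t \log_2 t$, condition \eqref{eq1} becomes
\[
g(\lambda) := \sum_{i=1}^n f\!\left(\tfrac{1}{n} + \lambda u_i\right) - \sum_{i=1}^n f\!\left(\tfrac{1}{n} + \lambda v_i\right) = 0 \qquad \text{for all } \lambda \in (0,a) .
\]

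The key step is to exploit the smoothness of $f$ on $(0,\infty)$. For $\lambda$ near $0$ every argument stays close to $\frac{1}{n} > 0$, so $g$ is $C^\infty$ (in fact real-analytic) on a neighborhood of $0$. Since $g$ vanishes identically on the open interval $(0,a)$, all its derivatives vanish there, and hence $g^{(k)}(0) = 0$ for every $k$. Differentiating termwise gives
\[
g^{(k)}(0) = f^{(k)}\!\left(\tfrac{1}{n}\right)\left(\sum_{i=1}^n u_i^k - \sum_{i=1}^n v_i^k\right) .
\]
A direct computation yields $f^{(k)}(t) = \dfrac{(-1)^{k-1}(k-2)!}{(\ln 2)\, t^{\,k-1}}$ for $k \ge 2$, which never vanishes; consequently $\sum_i u_i^k = \sum_i v_i^k$ for every $k \ge 2$. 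The cases $k = 0$ and $k = 1$ hold automatically, the latter because $\Tr \rho = \Tr \sigma = 1$ forces $\sum_i u_i = \sum_i v_i = 0$.

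Having matched all power sums of the multisets $\{u_i\}$ and $\{v_i\}$, I would invoke Newton's identities (over $\bR$) to recover equality of the elementary symmetric functions, so the two multisets, and therefore $\{x_i\}$ and $\{y_i\}$, coincide. Thus $\rho$ and $\sigma$ have identical spectra and are unitarily equivalent. I expect the point requiring the most care to be the nonvanishing of the derivatives $f^{(k)}(1/n)$, since the entire argument hinges on reading off each power-sum equality from the corresponding derivative of $g$ at $0$; once these equalities are secured, the passage from equal power sums to equal spectra is routine.
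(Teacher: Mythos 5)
Your proof is correct, but the key step runs along a genuinely different line from the paper's. Both arguments make the same initial reduction to spectral data via $u_i = x_i - \tfrac{1}{n}$, $v_i = y_i - \tfrac{1}{n}$, and both end by concluding that the multisets $\{u_i\}$ and $\{v_i\}$ coincide; the difference is how that is extracted from the entropy hypothesis. The paper differentiates the quotient $f(\lambda)/\lambda$ exactly once and observes the identity
$$ n \lambda^2 \, \frac{d}{d\lambda} \left( \frac{f(\lambda)}{\lambda} \right) = \log_2 \left( \prod_{i=1}^n \left( \lambda u_i + \tfrac{1}{n} \right) \right) , $$
so the hypothesis forces the polynomials $p(\lambda) = \prod_i \left( \lambda u_i + \tfrac{1}{n} \right)$ and $q(\lambda) = \prod_i \left( \lambda v_i + \tfrac{1}{n} \right)$ to agree on $(0,a)$, hence identically, and equality of the root multisets follows at once — no higher derivatives, no symmetric-function theory. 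You instead expand at $\lambda = 0$: real-analyticity plus vanishing on $(0,a)$ kills every Taylor coefficient, each coefficient is a nonzero multiple of $\sum_i u_i^k - \sum_i v_i^k$ (your formula $f^{(k)}(t) = \frac{(-1)^{k-1}(k-2)!}{(\ln 2)\, t^{k-1}}$ for $k \ge 2$ is correct and indeed never vanishes), and Newton's identities convert equal power sums into equal spectra. The two proofs ultimately extract the same information, since the Taylor coefficients of $\log_2 p(\lambda)$ at $0$ are, up to nonzero constants, exactly the power sums $\sum_i u_i^k$; they just package it differently. The paper's route is shorter and self-contained, producing in one stroke a polynomial whose coefficients are the elementary symmetric functions of the $u_i$. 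Your route costs an appeal to Newton's identities and the computation of $f^{(k)}$, but it is more robust: it never uses the special fact that the entropy's derivative collapses to the logarithm of a polynomial, only that $-t \log_2 t$ has nonvanishing derivatives of all orders $\ge 2$ at $t = \tfrac{1}{n}$, so the same argument would apply verbatim to other spectral functions with that property.
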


\begin{proof}
Clearly, the equality (\ref{eq1}) is a necessary condition for the unitary equivalence of $\rho$ and $\sigma$.
To prove its sufficiency, we denote by $f(\lambda)$ and $g(\lambda)$ the left-hand and the right-hand side of (\ref{eq1}), respectively.
Assume that the eigenvalues of $\rho$ and $\sigma$ are $x_1 \geq x_2 \geq \ldots \geq x_n$ and  $y_1 \geq y_2 \geq \ldots \geq y_n$, respectively.
Putting $u_i = x_i - \frac{1}{n}$ for all $i = 1, 2, \ldots, n$, we have $\sum_{i=1}^n u_i = 0$, since $1 = \Tr (\rho) = \sum_{i=1}^n x_i$.
Then, for every $\lambda \in (0, a)$, 
$$ f (\lambda) = - \sum_{i=1}^n  \left( \lambda x_i + \frac{1-\lambda}{n}\right) \, \log_2 \left(  \lambda x_i + \frac{1-\lambda}{n} \right) = $$
$$    = - \sum_{i=1}^n  \left( \lambda u_i + \frac{1}{n} \right) \log_2 \left( \lambda u_i + \frac{1}{n} \right)   $$
and 
$$ \frac{d}{d\lambda} \left(\frac{f (\lambda)}{\lambda} \right) = 
-  \sum_{i=1}^n  \left( - \frac{1}{n \lambda^2} \log_2 \left( \lambda u_i + \frac{1}{n} \right) + 
            \left( u_i + \frac{1}{n \lambda} \right) \cdot \frac{1}{\lambda u_i + \frac{1}{n}} \cdot \frac{u_i}{\ln 2} \right) = $$
$$ = \frac{1}{n \lambda^2} \sum_{i=1}^n  \log_2 \left( \lambda u_i + \frac{1}{n} \right) - \sum_{i=1}^n \frac{u_i}{\lambda \ln 2}                  
= \frac{1}{n \lambda^2} \log_2 \left( \prod_{i=1}^n  \left( \lambda u_i + \frac{1}{n} \right) \right) . $$
Introducing the polynomial $p(\lambda) = \prod_{i=1}^n  \left( \lambda u_i + \frac{1}{n} \right)$ of degree at most $n$, we therefore have 
\begin{equation}
 n \lambda^2 \frac{d}{d\lambda} \left(\frac{f (\lambda)}{\lambda} \right) = \log_2 \left( p(\lambda) \right)
\label{eq2}
\end{equation}
for all $\lambda \in (0, a)$.
Analogously, if we set $v_i = y_i - \frac{1}{n}$ for all $i = 1, 2, \ldots, n$ and we define the polynomial
$q(\lambda) = \prod_{i=1}^n  \left( \lambda v_i + \frac{1}{n} \right)$, then  
\begin{equation}
 n \lambda^2 \frac{d}{d\lambda} \left(\frac{g (\lambda)}{\lambda} \right) = \log_2 \left( q(\lambda) \right) 
\label{eq3}
\end{equation}
for all $\lambda \in (0, a)$.
Now, the equalities (\ref{eq1}),  (\ref{eq2}) and (\ref{eq3}) imply that $p(\lambda) = q(\lambda)$ for all $\lambda \in (0, a)$, and so $p=q$. 
It follows that the $n$-tuples $(u_1, u_2, \ldots, u_n)$ and $(v_1, v_2, \ldots, v_n)$ are the same except for a permutation. 
Therefore, the quantum states $\rho$ and $\sigma$ have the same spectrum, and so they are unitarily equivalent.
\end{proof}

The last part of the proof of Theorem \ref{infinite_points} leads to a question of whether it is enough to verify the condition  
(\ref{eq1}) only for finitely many scalars $\lambda \in [0, 1]$. The following theorem shows that $2n$ scalars are sufficient.

\begin{theorem}
\label{finite_points} 
Let $\rho$ and $\sigma$ be quantum states on $\cH$, and let 
$0 < \lambda_1 < \lambda_2 < \ldots < \lambda_{2n} \le 1$.  
Then 
\begin{equation}
S \left( \lambda_i \rho + \frac{1-\lambda_i}{n} \, I \right) = S \left( \lambda_i \sigma + \frac{1-\lambda_i}{n} \, I\right)
\textrm{   for all   } i = 1, 2, \ldots, 2n  
\label{eq4}
\end{equation}
if and only if there exists a unitary operator $U$ such that $\rho = U \sigma U^*$.
\end{theorem}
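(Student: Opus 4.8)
The plan is to reduce the finite-point hypothesis to the polynomial identity $p = q$ that was already isolated in the proof of Theorem \ref{infinite_points}, and then to reuse the last two sentences of that proof verbatim. I keep the notation $f(\lambda)$ and $g(\lambda)$ for the two sides of the entropy equality and the polynomials $p(\lambda) = \prod_{i=1}^n (\lambda u_i + \frac1n)$, $q(\lambda) = \prod_{i=1}^n (\lambda v_i + \frac1n)$ of degree at most $n$, together with the identities (\ref{eq2}) and (\ref{eq3}). The essential point is that these identities express the derivatives of $f(\lambda)/\lambda$ and $g(\lambda)/\lambda$ through $\log_2 p$ and $\log_2 q$; so a \emph{counting} argument based on Rolle's theorem should replace the differentiate-and-equate step that was available on an interval.

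Concretely, first I would set $D(\lambda) = f(\lambda)/\lambda - g(\lambda)/\lambda$. Since each $\lambda_i > 0$ and $f(\lambda_i) = g(\lambda_i)$, the $2n$ hypotheses (\ref{eq4}) say precisely that $D$ vanishes at the $2n$ distinct points $\lambda_1 < \cdots < \lambda_{2n}$. The function $D$ is continuous on $[\lambda_1,\lambda_{2n}] \subseteq (0,1]$ and smooth on the interior, so Rolle's theorem applied on each $[\lambda_i,\lambda_{i+1}]$ produces $2n-1$ distinct points $\mu_1,\dots,\mu_{2n-1}$ lying in $(\lambda_1,\lambda_{2n}) \subset (0,1)$ at which $D' = 0$.

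Next I would convert each equation $D'(\mu_j) = 0$ into an equation for $p$ and $q$. Dividing (\ref{eq2}) and (\ref{eq3}) by $n\lambda^2$ gives $D'(\lambda) = \frac{1}{n\lambda^2}\bigl(\log_2 p(\lambda) - \log_2 q(\lambda)\bigr)$ on $(0,1)$, where $p(\mu_j), q(\mu_j) > 0$ because $\mu_j u_i + \frac1n = \mu_j x_i + \frac{1-\mu_j}{n} > 0$ for $\mu_j \in (0,1)$. Hence $D'(\mu_j) = 0$ forces $p(\mu_j) = q(\mu_j)$, so the polynomial $p - q$ has at least $2n-1$ distinct roots. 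As $\deg(p-q) \le n$ and $2n-1 > n$ for every $n \ge 2$ (the case $n = 1$ being trivial, since then $\rho = \sigma = I$), I conclude $p \equiv q$. From here the reasoning is identical to the end of the proof of Theorem \ref{infinite_points}: $p = q$ forces the tuples $(u_i)$ and $(v_i)$ to agree up to a permutation, so $\rho$ and $\sigma$ have the same spectrum and are unitarily equivalent; the reverse implication is again immediate.

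The step requiring the most care is the bookkeeping at the right endpoint when $\lambda_{2n} = 1$: I must check that $D$ is continuous up to $1$ (which it is, by the convention $0\log_2 0 = 0$, even when $\rho$ or $\sigma$ is singular) while using only its differentiability on the open intervals, so that no Rolle point is ever pushed to $\lambda = 1$, where $\log_2 p$ may diverge. Everything else is a degree count, and the comfortable gap between the bound $\deg(p-q) \le n$ and the $2n-1$ available Rolle zeros suggests that in fact far fewer than $2n$ points suffice.
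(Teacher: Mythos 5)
Your proof is correct, but it follows a genuinely different route from the paper's. The paper works with $h = f - g$ and its \emph{second} derivative: it shows that $h''(\lambda)\, p(\lambda)\, q(\lambda)$ is a polynomial of degree at most $2n-2$ (the top coefficient dies because $\sum_i u_i = 0$), assumes $h'' \not\equiv 0$, and counts zeros downward --- $h''$ has at most $2n-2$ zeros, hence $h'$ at most $2n-1$, hence $h$ at most $2n$ on $[0,1]$ --- contradicting the $2n+1$ zeros $0, \lambda_1, \ldots, \lambda_{2n}$; it then integrates back up ($h'' \equiv 0$, $h'(0) = 0$, $h(0) = 0$) to conclude $h \equiv 0$ on $[0,1]$ and invokes Theorem \ref{infinite_points} as a black box. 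You instead run Rolle upward on $D = (f-g)/\lambda$, using the identities (\ref{eq2})--(\ref{eq3}) to convert the $2n-1$ Rolle points directly into roots of the polynomial $p - q$ of degree at most $n$, and you finish with the spectral argument from the end of the first proof rather than with the theorem itself. Your route is more economical: it needs only the first-derivative identity already on record, avoids both the second-derivative computation and the argument by contradiction, does not use the zero of $h$ at $\lambda = 0$, and makes transparent that the bound $2n$ is not sharp (indeed $p-q$ has vanishing constant and linear coefficients, so roughly $n$ points would suffice, consistent with your closing remark --- whereas the paper's count genuinely consumes all $2n$ points plus the zero at the origin). Two details you flag are indeed the ones that must be said explicitly for your argument to be airtight: the identities (\ref{eq2})--(\ref{eq3}) were stated for $\lambda \in (0,a)$, but the computation behind them is valid on all of $(0,1)$ since $\lambda u_i + \frac{1}{n} \geq \frac{1-\lambda}{n} > 0$ there, which is the range on which you use them; and continuity of $f$ and $g$ at $\lambda = 1$ (via the convention $0 \log_2 0 = 0$) is exactly what lets Rolle operate on the last interval when $\lambda_{2n} = 1$ while keeping every Rolle point strictly inside $(0,1)$, away from the possible singularity of $\log_2 p$. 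With those remarks, and the trivial case $n=1$ handled as you do, your proof is complete.
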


\begin{proof}
Keep the notation from the proof of Theorem \ref{infinite_points}. The first derivative of $f$ with respect to $\lambda$ is 
$$ f^{\prime} (\lambda) = - \sum_{i=1}^n  u_i \cdot \log_2 \left( \lambda u_i + \frac{1}{n} \right) - \sum_{i=1}^n \frac{u_i}{\ln 2} 
= - \sum_{i=1}^n  u_i \cdot \log_2 \left( \lambda u_i + \frac{1}{n} \right) , $$
since $\sum_{i=1}^n u_i = 0$. Since the second derivative of $f$ is 
$$ f^{\prime \prime} (\lambda) = - \sum_{i=1}^n  \frac{u_i^2}{\ln 2 \cdot \left( \lambda u_i + \frac{1}{n} \right)} , $$
we obtain that  
$$ f^{\prime \prime} (\lambda) \cdot \prod_{i=1}^n  \left( \lambda u_i + \frac{1}{n} \right)  $$
is a polynomial of degree at most $n-1$. In fact, the coeficient at $\lambda^{n-1}$ is 
$$ \frac{(u_1 + u_2 + \ldots + u_n) u_1 u_2 \ldots u_n}{\ln 2} = 0 , $$
so that the degree is at most $n-2$. Analogously, 
$$ g^{\prime \prime} (\lambda) \cdot \prod_{i=1}^n  \left( \lambda v_i + \frac{1}{n} \right) $$
is a polynomial of degree at most $n-2$. 
Then the function $h = f - g$ has the property that 
$$ h^{\prime \prime} (\lambda) \cdot \prod_{i=1}^n  \left( \lambda u_i + \frac{1}{n} \right) \cdot \prod_{i=1}^n  \left( \lambda v_i + \frac{1}{n} \right)  $$
is a polynomial of degree at most $2n-2$. Assume that the function $h^{\prime \prime}$ is non-zero. 
Then it has at most $2n-2$ zeros. It follows that $h^{\prime}$ has at most $2n-1$ zeros on the interval $[0, 1]$, and so 
the function $h$ has at most $2n$ zeros on the interval $[0, 1]$. However, $h(0) = f(0) - g(0) = 0$ and 
$h(\lambda_i) = 0$ for all $i = 1, 2, \ldots, 2n$ by (\ref{eq4}). This contradiction proves that $h^{\prime \prime} = 0$. 
Since $f^{\prime}(0) = g^{\prime}(0) = 0$, we have $h^{\prime}(0) = 0$, so that $h(\lambda) = 0$ for all $\lambda \in [0, 1]$. 
Now, we apply Theorem \ref{infinite_points} to conclude that $\rho$ and $\sigma$ are unitarily equivalent.
\end{proof}

\vspace{5mm}
{\bf
\begin{center}
 Acknowledgment
\end{center}
} 
The author was supported in part by the Slovenian Research Agency.

\vspace{3mm}

\noindent
Roman Drnov\v sek \\
Department of Mathematics \\
Faculty of Mathematics and Physics \\
University of Ljubljana \\
Jadranska 19 \\
SI-1000 Ljubljana \\
Slovenia \\
e-mail : roman.drnovsek@fmf.uni-lj.si 

\end{document}